\tikzset{
	level 1/.style = {sibling distance = 1.5cm},
	level 2/.style = {sibling distance = 0.8cm},
    level distance = 1.0 cm
}
\tikzstyle{snakeline} = [decorate, decoration={snake, amplitude=.4mm, segment length=2mm}]
\newtheorem{theorem}{Theorem}
\newtheorem{proposition}[theorem]{Proposition}
\newtheorem{lemma}[theorem]{Lemma}
\newtheorem{example}[theorem]{Example}
\newtheorem{problem}[theorem]{Problem}
\newcommand{\plat}{{\rm plat\,}}
\newcommand{\des}{{\rm des\,}}
\newcommand{\msn}{\mathfrak{S}_n}
\newcommand{\ms}{\mathfrak{S}}
\newcommand{\lrf}[1]{\lfloor #1\rfloor}
\newcommand{\asc}{{\rm asc\,}}
\title{Commuting Eulerian operators}
\author[S.-M.~Ma]{Shi-Mei Ma}
\address{School of Mathematics and Statistics,
        Northeastern University at Qinhuangdao,
         Hebei 066000, P.R. China}
\email{shimeimapapers@163.com (S.-M. Ma)}
\author[H.~Qi]{Hao Qi}
\address{College of Mathematics and Physics, Wenzhou University, Wenzhou 325035, P.R. China}
\email{qihao@wzu.edu.cn(H.~Qi)}
\author{Jean Yeh}
\address{Department of Mathematics, National Kaohsiung Normal University, Kaohsiung 82444, Taiwan}
\email{chunchenyeh@nknu.edu.tw}
\author[Y.-N. Yeh]{Yeong-Nan Yeh}
\address{College of Mathematics and Physics, Wenzhou University, Wenzhou 325035, P.R. China}
\email{mayeh@math.sinica.edu.tw (Y.-N. Yeh)}
\subjclass[2010]{Primary 05A05; Secondary 05A19}
\begin{document}

\maketitle
\begin{abstract}
Motivated by the work of Visontai and Dey-Sivasubramanian on the gamma-positivity of some polynomials, we find the commutative property of
a pair of Eulerian operators. As an application, we
show the bi-gamma-positivity of
the descent polynomials on permutations of the multiset $\{1^{a_1},2^{a_2},\ldots,n^{a_n}\}$, where $0\leqslant a_i\leqslant 2$.
Therefore, these descent polynomials are all alternatingly increasing, and so they are unimodal with modes in the middle.
\bigskip

\noindent{\sl Keywords}: Eulerian operators; Eulerian polynomials; Unimodality; Gamma-positivity
\end{abstract}
\date{\today}
\section{Introduction}
Let $f(x)=\sum_{i=0}^nf_ix^i$ be a polynomial with nonnegative coefficients.
We say that $f(x)$ is {\it unimodal} if $f_0\leqslant f_1\leqslant \cdots\leqslant f_k\geqslant f_{k+1}\geqslant\cdots \geqslant f_n$ for some $k$,
where the index $k$ is called the {\it mode} of $f(x)$.
It is well known that if $f(x)$ with only nonpositive real zeros, then $f(x)$ is unimodal (see~\cite[p.~419]{Bre94} for instance).
If $f(x)$ is symmetric with the center of symmetry $\lrf{n/2}$, i.e., $f_i=f_{n-i}$ for all indices $0\leqslant i\leqslant n$,
then it can be expanded as
$$f(x)=\sum_{k=0}^{\lrf{{n}/{2}}}\gamma_kx^k(1+x)^{n-2k}.$$
The polynomial $f(x)$ is {\it $\gamma$-positive}
if $\gamma_k\geqslant 0$ for all $0\leqslant k\leqslant \lrf{{n}/{2}}$.
Clearly, $\gamma$-positivity implies symmetry and unimodality.
Let $f(x,y)=\sum_{i=0}^nf_ix^iy^{n-i}$ be a homogeneous bivariate polynomial. We say that $f(x,y)$ is {\it bivariate $\gamma$-positive} with the center of symmetry $\frac{n}{2}$
if $f(x,y)$ can be written as follows:
$$f(x,y)=\sum_{k=0}^{\lrf{{n}/{2}}}\gamma_k(xy)^k(x+y)^{n-2k}.$$
There has been considerable recent interest in the study of the $\gamma$-positivity of polynomials,
see~\cite{Athanasiadis17,Branden14} for details. In particular,
Br\"and\'en~\cite[Remark~7.3.1]{Branden14} noted that
if $f(x)$ is symmetric and has only real zeros, then it is $\gamma$-positive.

Let $f(x)=\sum_{i=0}^nf_ix^i$, where $f_n\neq 0$.
Following~\cite{Beck2010,Branden18}, there is a unique symmetric decomposition $f(x)= a(x)+xb(x)$, where
\begin{equation*}\label{ax-bx-prop01}
a(x)=\frac{f(x)-x^{n+1}f(1/x)}{1-x},~b(x)=\frac{x^nf(1/x)-f(x)}{1-x}.
\end{equation*}
According to~\cite[Definition~8]{Ma2021}, the polynomial $f(x)$ is said to be {\it bi-$\gamma$-positive} if both $a(x)$ and $b(x)$ are $\gamma$-positive.
Thus $\gamma$-positivity is a special case of bi-$\gamma$-positivity.
Following~\cite[Definition 2.9]{Schepers13}, the polynomial $f(x)$ is {\it alternatingly increasing} if
$$f_0\leqslant f_n\leqslant f_1\leqslant f_{n-1}\leqslant\cdots \leqslant f_{\lrf{{(n+1)}/{2}}}.$$
Br\"and\'en and Solus~\cite{Branden18} pointed out that
$f(x)$ is alternatingly increasing if and only if the pair of polynomials in its symmetric decomposition are both unimodal
and have only nonnegative coefficients. Therefore, bi-$\gamma$-positivity is stronger than alternatingly increasing property.
The alternatingly increasing property first appeared in the work of Beck and Stapledon~\cite{Beck2010}.
Recently, Beck-Jochemko-McCullough~\cite{Beck2019}, Br\"and\'en-Solus~\cite{Branden18} and Solus~\cite{Solus19} studied the
alternatingly increasing property of several $h^*$-polynomials as well as some refined Eulerian polynomials.

A multipermutation of a multiset is a sequence of its elements. Throughout this paper, we always let $\mathbf{m}=(m_1,m_2,\ldots,m_n)\in \mathbb{P}^n$.
Denote by $\ms_{\mathbf{m}}$ the set of all multipermutations of
the multiset $\{1^{m_1},2^{m_2},\ldots,n^{m_n}\}$, where $i$ appears $m_i$ times.
Set $m=\sum_{i=1}^nm_i$.
For $\pi=\pi_1\pi_2\ldots\pi_{m}\in\ms_{\mathbf{m}}$, we always assume that $\pi_0=\pi_{m+1}=0$ (except where explicitly stated).
If $i\in \{0,1,2,\ldots,m\}$, then $\pi_{i}$ is called an {\it ascent} (resp.~{\it descent},~{\it plateau}) if $\pi_{i}<\pi_{i+1}$ (resp.~$\pi_{i}>\pi_{i+1}$,~$\pi_{i}=\pi_{i+1}$).
Let $\asc(\pi)$ (resp.~$\des(\pi)$,~$\plat(\pi)$) be the number of ascents (resp.~descents,~plateaux) of $\pi$.
The {\it multiset Eulerian polynomials} $A_{\mathbf{m}}(x)$ are defined by
$$A_{\mathbf{m}}(x)=\sum_{\pi\in\ms_{\mathbf{m}}}x^{\asc(\pi)}=\sum_{\pi\in\ms_{\mathbf{m}}}x^{\des(\pi)}.$$
A classical result of MacMahon~\cite[Vol~2, Chapter IV, p.~211]{MacMahon20} says that
\begin{equation}\label{MacMahon}
\frac{A_{\mathbf{m}}(x)}{(1-x)^{1+m}}
=\sum_{k\geqslant 0}\binom{k+m_1}{m_1}\binom{k+m_2}{m_2}\cdots \binom{k+m_n}{m_n}x^{k+1}.
\end{equation}
Let $\msn$ be the set of all permutations of $\{1,2,\ldots,n\}$. As usual, we write $\pi=\pi_1\pi_2\cdots\pi_n\in\msn$.
Denote by $A_{\pi({\mathbf{m})}}(x)$ the descent polynomial on multipermutations of
$\{\pi_1^{m_1},\pi_2^{m_2},\ldots,\pi_n^{m_n}\}$.
It follows from~\eqref{MacMahon} that
\begin{equation}\label{MacMahon02}
A_{\mathbf{m}}(x)=A_{\pi({\mathbf{m})}}(x).
\end{equation}
When $\mathbf{m}=(1,1,\ldots,1)$, the polynomial $A_{\mathbf{m}}(x)$ is reduced to the classical Eulerian polynomial $A_n(x)$.
In other words,
$$A_n(x)=\sum_{\pi\in\msn}x^{\asc(\pi)}=\sum_{\pi\in\msn}x^{\des(\pi)}.$$

Simion~\cite[Section~2]{Simion84} found that
$A_{\mathbf{m}}(x)$ is real-rootedness for any $\mathbf{m}$.
When $\mathbf{m}=(p,p,\ldots,p)$, Carlitz-Hoggatt~\cite{Carlitz78} showed that $A_{\mathbf{m}}(x)$
is symmetric, where $p$ is a given positive integer.
By~\cite[Remark~7.3.1]{Branden14},
an immediate consequence is the following well known result.
\begin{proposition}\label{prop01}
For any $\mathbf{m}$, the multiset Eulerian polynomials $A_{\mathbf{m}}(x)$ are all unimodal.
When $\mathbf{m}=(p,p,\ldots,p)$, the polynomial $A_{\mathbf{m}}(x)$ is $\gamma$-positive, and so its mode is in the middle.
\end{proposition}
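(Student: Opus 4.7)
The plan is to assemble the three previously-cited ingredients. First, Simion's theorem \cite{Simion84} asserts that $A_{\mathbf{m}}(x)$ has only real zeros. Since $A_{\mathbf{m}}(x)=\sum_{\pi\in\ms_{\mathbf{m}}}x^{\asc(\pi)}$ has manifestly nonnegative coefficients, none of these zeros can be positive (a positive real root would force a sign change in the coefficient sequence). Hence $A_{\mathbf{m}}(x)$ in fact has only nonpositive real zeros, and the classical unimodality criterion recalled from Brenti at the beginning of the section (a polynomial with nonnegative coefficients and only nonpositive real zeros is unimodal) delivers the first assertion.

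For the second assertion, specialize to $\mathbf{m}=(p,p,\ldots,p)$. Carlitz-Hoggatt \cite{Carlitz78} established symmetry of $A_{\mathbf{m}}(x)$ in this case, while Simion still provides real-rootedness. Now Br\"and\'en's observation recalled from Remark~7.3.1 of \cite{Branden14}---that a symmetric polynomial with only real zeros is automatically $\gamma$-positive---applies verbatim, yielding $\gamma$-positivity. Since $\gamma$-positivity implies symmetry and unimodality of the coefficient sequence, the mode must coincide with the center of symmetry, which by definition lies in the middle.

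The proof is essentially bookkeeping: no new combinatorics is required, since the key analytic inputs (real-rootedness, symmetry in the equal-multiplicity case, and the symmetric-plus-real-rooted $\Rightarrow$ $\gamma$-positive principle) are all imported from the literature. The only step worth spelling out precisely is the reduction from ``real-rooted with nonnegative coefficients'' to ``only nonpositive real roots,'' after which the cited theorems chain together in an obvious order to yield both halves of the proposition. Consequently, there is no substantive obstacle to overcome; the value of the statement here is to record, as a baseline, what follows immediately from existing results before the new contributions of the paper strengthen these conclusions to bi-$\gamma$-positivity.
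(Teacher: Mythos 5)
Your proposal is correct and follows exactly the route the paper intends: the paper presents Proposition~\ref{prop01} as an immediate consequence of Simion's real-rootedness, Carlitz--Hoggatt's symmetry in the equal-multiplicity case, Brenti's unimodality criterion, and Br\"and\'en's Remark~7.3.1, which are precisely the ingredients you assemble. Your explicit reduction from ``real-rooted with nonnegative coefficients'' to ``only nonpositive real zeros'' is a small but worthwhile clarification that the paper leaves implicit.
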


Recently, there has been much work on the
descent polynomials of permutations over multisets, see~\cite{Lin2101,Lin21,Lin22,Liu21,Yan2022} for instance.
In particular, Lin-Xu-Zhao~\cite{Lin22} found a combinatorial interpretation for the
$\gamma$-coefficients of $A_{\mathbf{m}}(x)$ via the model of weakly increasing trees, where $\mathbf{m}=(p,p,\ldots,p)$.
Motivated by Proposition~\ref{prop01}, it is natural to consider the following problem.
\begin{problem}\label{prob01}
For any $\mathbf{m}$, could we
characterize the location of the mode of $A_{\mathbf{m}}(x)$?
\end{problem}

A bivariate version of the Eulerian polynomial over the symmetric group is given as follows:
$$A_n(x,y)=\sum_{\pi\in\msn}x^{\asc(\pi)}y^{\des(\pi)}.$$
In particular, $A_n(x,1)=A_n(1,x)=A_n(x)$.
Carlitz and Scoville~\cite{Carlitz74} found that
\begin{equation*}\label{CarlitzSco74}
A_{n+1}(x,y)=xy\left(\frac{\partial}{\partial x}+\frac{\partial}{\partial y}\right)A_n(x,y),~A_1(x,y)=xy.
\end{equation*}
Using the following Eulerian operator
\begin{equation}\label{operator01}
T=xy\left(\frac{\partial}{\partial x}+\frac{\partial}{\partial y}\right),
\end{equation}
Foata and Sch\"utzenberger~\cite{Foata70} discovered that
\begin{equation*}\label{Anx-gamma}
A_n(x,y)=\sum_{k=1}^{\lrf{({n+1})/{2}}}\gamma(n,k)(xy)^k(x+y)^{n+1-2k},
\end{equation*}
where $\gamma(n,k)$ are all nonnegative integers.
Applying the same idea,
Visontai~\cite{Visontai} investigated the joint generating polynomial of
descents and inverse descents, Dey-Sivasubramanian~\cite{Dey20} studied the descent polynomials on permutations in the alternating group.
As an illustration, we now recall a result on the Eulerian operator $T$, which is a slightly variant of~{\cite[Lemma~5]{Dey20}}.
\begin{lemma}\label{lemma01}
Let $f(x,y)$ be a bivariate $\gamma$-positive polynomial with the center of symmetry $\frac{n}{2}$.
Then $T(f(x,y))$ is a bivariate $\gamma$-positive polynomial with the center of symmetry $\frac{n+1}{2}$.
\end{lemma}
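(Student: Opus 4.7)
The plan is to exploit linearity of $T$ and reduce the statement to a single computation: checking the action of $T$ on an individual $\gamma$-building block $(xy)^k(x+y)^{n-2k}$. Once that one-block identity is secured, applying it term-by-term to the given expansion
$$f(x,y)=\sum_{k=0}^{\lrf{n/2}}\gamma_k(xy)^k(x+y)^{n-2k},\qquad \gamma_k\geqslant 0,$$
and collecting results yields an expansion of $T(f(x,y))$ as a nonnegative combination of building blocks $(xy)^j(x+y)^{n+1-2j}$, i.e.\ bivariate $\gamma$-positivity with center of symmetry $\frac{n+1}{2}$.

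The key calculation uses the Leibniz rule together with the elementary identities $(\partial_x+\partial_y)(xy)^k=k(xy)^{k-1}(x+y)$ and $(\partial_x+\partial_y)(x+y)^{n-2k}=2(n-2k)(x+y)^{n-2k-1}$. Multiplying the resulting two summands by $xy$ produces precisely two higher-level building blocks, with the schematic outcome
$$T\bigl((xy)^k(x+y)^{n-2k}\bigr)=k\,(xy)^k(x+y)^{n+1-2k}+2(n-2k)\,(xy)^{k+1}(x+y)^{n+1-2(k+1)}.$$
This is the entire engine of the proof; the rest is bookkeeping.

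The crucial observation is that both coefficients, $k$ and $2(n-2k)$, are nonnegative throughout the admissible range $0\leqslant k\leqslant \lrf{n/2}$, so summing the contributions with weights $\gamma_k\geqslant 0$ preserves nonnegativity and the symmetric bivariate $\gamma$-expansion shape. The only bookkeeping subtlety is at the boundary: when $n$ is even and $k=\lrf{n/2}$, the coefficient $2(n-2k)$ vanishes, which is exactly what is needed to keep the index $j=k+1$ within $0\leqslant j\leqslant \lrf{(n+1)/2}$; and when $k=0$ the first coefficient vanishes harmlessly. There is thus no genuine obstacle in the argument — the Leibniz identity is one line, and verifying the index bounds is routine — so the lemma reduces to the single display above combined with linearity.
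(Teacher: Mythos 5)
Your proof is correct and is essentially the argument the paper itself uses: the paper cites this lemma from Dey--Sivasubramanian without reproving it, but the identical one-block identity
$T\bigl((xy)^k(x+y)^{n-2k}\bigr)=k(xy)^k(x+y)^{n+1-2k}+2(n-2k)(xy)^{k+1}(x+y)^{n-1-2k}$
is exactly the computation carried out in case ($a_1$) of Section~3, followed by the same linearity and nonnegativity bookkeeping. Nothing further is needed.
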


Motivated by the work of Visontai~\cite{Visontai} and Dey-Sivasubramanian~\cite{Dey20},
in this paper we introduce the following Eulerian operator
\begin{equation}\label{operator02}
G=xy^2\left(\frac{\partial}{\partial x}+\frac{\partial}{\partial y}\right)+\frac{x^2y^2}{2}\left(\frac{\partial^2}{\partial x^2}+\frac{\partial^2}{\partial y^2 }\right)+x^2y^2\frac{\partial^2}{\partial x\partial y}.
\end{equation}
In the next section, we prove the commutative property of the Eulerian operators $T$ and $G$. In Section~\ref{Section:03}, we
prove following result, which gives a partial answer to Problem~\ref{prob01}.
\begin{theorem}\label{mainthm1}
Let $\mathbf{m}=\{m_1,m_2,\ldots,m_n\}$, where $0\leqslant m_i\leqslant 2$.
The Eulerian polynomials $A_{\mathbf{m}}(x)$ are all bi-$\gamma$-positive, and so $A_{\mathbf{m}}(x)$ are all alternating increasing.
More precisely, when $\mathbf{m}=\{1,1,\ldots,1\}$ or $\mathbf{m}=\{2,2,\ldots,2\}$, the polynomial $A_{\mathbf{m}}(x)$ is $\gamma$-positive;
for the other cases, the polynomial $A_{\mathbf{m}}(x)$ can be written as a sum of two $\gamma$-positive polynomials.
\end{theorem}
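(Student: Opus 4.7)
The plan is to exploit the commuting operator framework introduced in Section~2: a suitable bivariate refinement $A_{\mathbf{m}}(x,y)$ of $A_{\mathbf{m}}(x)$ satisfies operator recurrences in which adjoining a new letter of multiplicity~$1$ corresponds to applying $T$ and adjoining a new letter of multiplicity~$2$ corresponds to applying $G$. Combined with MacMahon's identity~\eqref{MacMahon02}, which asserts that $A_{\mathbf{m}}(x)$ depends only on the multiset of multiplicities, the commutativity of $T$ and $G$ proved in Section~2 allows me to interleave applications of $T$ (once for each index with $m_i=1$) and $G$ (once for each index with $m_i=2$) in any order starting from a small base polynomial. The first step is to verify these two recurrences by a case analysis on how a new maximal singleton, or a new maximal pair, alters the ascent/descent/plateau type of each gap; the two terms $xy^{2}(\partial_x+\partial_y)$ and $\tfrac12(xy)^{2}(\partial_x+\partial_y)^{2}$ of $G$ account respectively for the subcases in which the two new equal letters land in the same gap (creating a plateau) and in two distinct gaps.

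The two pure subcases of the theorem then follow by iterating a single operator. For $\mathbf{m}=\{1^n\}$ only $T$ is used and Lemma~\ref{lemma01} applies directly, yielding $\gamma$-positivity. For $\mathbf{m}=\{2^n\}$ only $G$ is used, and here I will establish a $G$-analogue of Lemma~\ref{lemma01}: acting on a bivariate $\gamma$-positive polynomial with center of symmetry $n/2$, $G$ produces a bivariate $\gamma$-positive polynomial with center of symmetry $(n+2)/2$. The proof is by the wedge expansion method: using the factorization $G=xy^{2}(\partial_x+\partial_y)+\tfrac12(xy)^{2}(\partial_x+\partial_y)^{2}$, I compute $G\bigl((xy)^{k}(x+y)^{n-2k}\bigr)$ in closed form and organize the terms as a nonnegative combination of wedges $(xy)^{j}(x+y)^{n+2-2j}$.

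The mixed case is the essential new content. Here $A_{\mathbf{m}}(x)$ need not be symmetric, so one must appeal to the refined bi-$\gamma$-positive decomposition $A_{\mathbf{m}}(x)=a(x)+xb(x)$. The strategy is to track two bivariate pieces $A_{\mathbf{m}}(x,y)=P(x,y)+Q(x,y)$ through the operator construction, with $P$ and $Q$ each bivariate $\gamma$-positive (with distinct centers of symmetry), specializing at $y=1$ to $a(x)$ and $xb(x)$ respectively. Since $T$ is symmetric under $x\leftrightarrow y$ it preserves this splitting; $G$, whose first term $xy^{2}(\partial_x+\partial_y)$ is \emph{not} $x\leftrightarrow y$-symmetric, mixes the two pieces in a controlled way, and the commutativity $TG=GT$ ensures that the resulting $P,Q$ are independent of the order of the operator applications. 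The main obstacle is a refined version of the $G$-analogue of Lemma~\ref{lemma01} that tracks the $P/Q$ splitting and verifies that each piece, when expanded in wedges of the appropriate degree, has only nonnegative coefficients; this is a finite but delicate wedge computation. Once the refined lemma is in place, the theorem follows by induction on the length of~$\mathbf{m}$, and the remaining claims (alternatingly increasing, unimodality, middle mode) are immediate consequences of bi-$\gamma$-positivity as recalled in the introduction.
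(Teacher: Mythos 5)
Your overall architecture is the paper's: realize $A_{\mathbf{m}}(x,y)$ as $G^{s}T^{r}(x)$ via the insertion lemma, use $TG=GT$ to sort the operators, and track wedge expansions by induction. However, the key new lemma you propose for the pure case $\mathbf{m}=\{2,2,\ldots,2\}$ is false: $G$ does \emph{not} carry a bivariate $\gamma$-positive polynomial with center of symmetry $n/2$ to one with center $(n+2)/2$, because $G$ is not invariant under exchanging $x$ and $y$ (its first summand is $xy^{2}(\partial_x+\partial_y)$, not $\tfrac{1}{2}xy(x+y)(\partial_x+\partial_y)$; your second summand $\tfrac12(xy)^{2}(\partial_x+\partial_y)^{2}$ is symmetric and harmless). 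Concretely, $G(xy)=xy^{3}+2x^{2}y^{2}$ is not symmetric in $x,y$, hence cannot be a nonnegative combination of the wedges $(xy)(x+y)^{2}$ and $(xy)^{2}$; the closed-form computation of $G\bigl((xy)^{k}(x+y)^{n-2k}\bigr)$ that you plan will therefore not close up inside the symmetric wedge basis. Moreover, even a correct symmetric-to-symmetric statement would not apply to the chain you need: the iterates $G^{k}(x)$, beginning with $G(x)=xy^{2}$, are never symmetric. The invariant class for the pure-$2$ chain is instead $y$ times a bivariate $\gamma$-positive polynomial, i.e.\ nonnegative combinations of $y(xy)^{k}(x+y)^{m-2k}$; one checks that
\[
G\bigl(y(xy)^{p}(x+y)^{q}\bigr)=y(xy)^{p}(x+y)^{q-2}\Bigl[\binom{p}{2}(x+y)^{4}+(1+p)(1+2q)\,xy(x+y)^{2}+4\binom{q}{2}(xy)^{2}\Bigr],
\]
which stays in that class and yields $\gamma$-positivity of $A_{\{2,\ldots,2\}}(x)$ upon setting $y=1$.

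A related imprecision undermines your mixed case. Since $A_{\mathbf{m}}(x,y)$ is homogeneous of degree $m+1$ with nonnegative coefficients, any splitting $A_{\mathbf{m}}=P+Q$ into pieces with nonnegative coefficients forces both $P$ and $Q$ to be homogeneous of degree $m+1$, and a homogeneous $x\leftrightarrow y$-symmetric polynomial of degree $m+1$ has center $(m+1)/2$; so $P$ and $Q$ cannot both be bivariate $\gamma$-positive ``with distinct centers of symmetry.'' The splitting that actually closes up under both operators takes $P=\sum_{k}c(m,k)(xy)^{k}(x+y)^{m+1-2k}$ and $Q=y\sum_{k}d(m,k)(xy)^{k}(x+y)^{m-2k}$; one then verifies four closure facts: $T$ preserves the $P$-class, $G$ maps the $P$-class into the sum of the two classes (its $G_1$-part produces the $y$-prefactored terms), $T$ maps the $Q$-class into the sum of the two classes, and $G$ preserves the $Q$-class. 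Setting $y=1$ exhibits $A_{\mathbf{m}}(x)$ as a sum of two $\gamma$-positive polynomials with centers $(m+1)/2$ and $m/2$, which is the asserted bi-$\gamma$-positivity. With the invariant classes corrected in this way your induction goes through; as written, the step for $\{2,\ldots,2\}$ and the closure of your $P/Q$ splitting under $G$ both fail.
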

In the following discussion, we always set $\mathbf{m}=\{m_1,m_2,\ldots,m_n\}$, where $0\leqslant m_i\leqslant 2$.
Let $$A_{\mathbf{m}}(x,y)=\sum_{\pi\in\ms_{\mathbf{m}}}x^{\des(\pi)}y^{m+1-\des(\pi)}.$$
where $m=\sum_{i=1}^nm_i$. Clearly, $A_{\mathbf{m}}(x,1)=A_{\mathbf{m}}(x)$. For convenience, set $A_{{\emptyset}}(x,y)=x$.
\begin{example}\label{example6}
We have $$A_{\{1\}}(x,y)=xy,~A_{\{2\}}(x,y)=xy^2,~A_{\{1,1\}}(x,y)=xy(x+y),$$
$$A_{\{1,1,1\}}(x,y)=xy(x^2+4xy+y^2),~A_{\{1,2\}}(x,y)=A_{\{2,1\}}(x,y)=xy^2(y+2x),$$
$$A_{\{2,2\}}(x,y)=xy^2(y^2+4xy+x^2),~A_{\{2,1,2\}}(x,y)=xy^2(y^3+12xy^2+15x^2y+2x^3).$$
\end{example}
\section{The commutative property of Eulerian operators}\label{Section:02}
\begin{lemma}\label{lemma1}
Let $\mathbf{m}=\{m_1,m_2,\ldots,m_n\}$, where $0\leqslant m_i\leqslant 2$.
Set $\overline{\mathbf{m}}=\mathbf{m}\cup\{n+1\}$ and $\underline{\mathbf{m}}=\mathbf{m}\cup\{n+1,n+1\}$.
Let $T$ and $G$ be the Eulerian operators defined by~\eqref{operator01} and~\eqref{operator02}, respectively.
Then we have $A_{\overline{\mathbf{m}}}(x,y)=T\left(A_{\mathbf{m}}(x,y)\right)$ and $A_{\underline{\mathbf{m}}}(x,y)=G\left(A_{\mathbf{m}}(x,y)\right)$.
\end{lemma}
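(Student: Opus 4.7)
The plan is to give a direct combinatorial proof by tracking how the statistics $\des$, $\asc$, and $\plat$ change when copies of the new maximum letter $n+1$ are inserted into a multipermutation. In both identities I will exploit the obvious bijection: every element of $\ms_{\overline{\mathbf{m}}}$ (resp.\ $\ms_{\underline{\mathbf{m}}}$) is obtained uniquely from some $\pi\in\ms_{\mathbf{m}}$ by specifying the gap(s) of $\pi$ in which to place one (resp.\ two) copies of $n+1$.

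Fix $\pi\in\ms_{\mathbf{m}}$ with $\des(\pi)=d$, $\asc(\pi)=a$, $\plat(\pi)=p$, and set $b=a+p$, so that $\pi$ contributes the monomial $x^d y^b$ to $A_{\mathbf{m}}(x,y)$ and $d+b=m+1$. Because $n+1$ strictly exceeds every entry of $\pi$ (including the boundary zeros), inserting a single copy of $n+1$ into a gap of $\pi$ always creates an ascent immediately before it and a descent immediately after it, while destroying the gap it was placed into. A three-case tabulation according to the type of the destroyed gap (ascent, descent, or plateau) yields the following: insertion into any of the $b$ non-descent gaps increases $\des$ by one and leaves $\asc+\plat$ unchanged, whereas insertion into any of the $d$ descent gaps leaves $\des$ unchanged and increases $\asc+\plat$ by one. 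Summing over all $m+1$ gaps, the contribution of $\pi$ to $A_{\overline{\mathbf{m}}}(x,y)$ is $b\,x^{d+1}y^b+d\,x^d y^{b+1}$, which is exactly $T(x^d y^b)$ since $T=xy(\partial_x+\partial_y)$ sends $x^d y^b$ to $d\,x^d y^{b+1}+b\,x^{d+1}y^b$. Summing over $\pi\in\ms_{\mathbf{m}}$ completes the proof of the first identity.

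For the second identity, I carry out the same bookkeeping with two indistinguishable copies of $n+1$, splitting insertion configurations according to whether both copies land in the same gap (``clumped'') or in two distinct gaps. In the clumped case, the destroyed gap is replaced by the pattern (ascent, plateau, descent); a three-case tabulation on the destroyed gap type yields a total contribution $b\,x^{d+1}y^{b+1}+d\,x^d y^{b+2}$. In the distinct-gap case, each of the two insertions acts independently as in the first identity, and counting unordered pairs by the types of the chosen gaps gives $\binom{b}{2}x^{d+2}y^b+bd\,x^{d+1}y^{b+1}+\binom{d}{2}x^d y^{b+2}$. Adding the two cases yields
$$\binom{b}{2}x^{d+2}y^b+b(d+1)\,x^{d+1}y^{b+1}+\binom{d+1}{2}x^d y^{b+2},$$
which a short direct expansion confirms equals $G(x^d y^b)$; summing over $\pi$ proves the second identity.

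The only genuine obstacle is the three-by-three case analysis for the clumped insertion in the second identity, where one must simultaneously track the destruction of one of the three gap types and the creation of three new gaps of types ascent, plateau, and descent; everything else is a short monomial-coefficient comparison, and the bijective framework avoids any need to handle the two copies of $n+1$ as an ordered pair. The hypothesis $0\leqslant m_i\leqslant 2$ plays no role in this argument and could be dropped, but I will keep it in order to match the statement.
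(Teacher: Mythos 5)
Your proof is correct and follows essentially the same route as the paper: the paper encodes each gap of $\pi$ with a label $x$ (descent) or $y$ (ascent/plateau) and reads off the insertion of one or two copies of $n+1$ as label-substitution rules matching the terms of $T$ and $G$, which is exactly your bookkeeping of $d=\des(\pi)$ and $b=\asc(\pi)+\plat(\pi)$ with the same clumped/distinct case split. Your closing observation that the hypothesis $0\leqslant m_i\leqslant 2$ is not needed for this lemma is also accurate.
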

\begin{proof}
Let $\pi\in\ms_{\mathbf{m}}$. We introduce a labeling of $\pi$ as follows:
\begin{itemize}
  \item [\rm ($L_1$)]if $\pi_i$ is a descent, then put a superscript label $x$ right after it;
 \item [\rm ($L_2$)] if $\pi_i$ is an ascent or a plateau, then put a superscript label $y$ right after it.
\end{itemize}
For example, for $\pi=12125433$,
the labeling of $\pi$ is given by $^y1^y2^x1^y2^y5^x4^x3^y3^y$.

When we insert the letter $n+1$ into $\pi$, we always get a label $x$ just before $n+1$ as well as a label $y$ right after $n+1$.
This corresponds to the substitution rule of labels: $x\rightarrow xy$ or $y\rightarrow xy$.
Thus the term $T\left(A_{\mathbf{m}}(x,y)\right)$ gives the contribution of all $\pi'\in\ms_{\overline{\mathbf{m}}}$ in which the element $n+1$ appears in positions $j$, where $0\leqslant j\leqslant m$.
Therefore, one has $A_{\overline{\mathbf{m}}}(x,y)=T\left(A_{\mathbf{m}}(x,y)\right)$.

When we insert two elements $n+1$ into $\pi$, we distinguish among three distinct cases:
\begin{itemize}
  \item [\rm ($c_1$)] If the pair $(n+1)(n+1)$ is inserted in a position of $\pi$, then the changes of labeling are illustrated as follows:
  $$\cdots\pi_i^x\pi_{i+1}\cdots\rightarrow \cdots\pi_i^y(n+1)^y(n+1)^x\pi_{i+1}\cdots,$$
  $$\cdots\pi_i^y\pi_{i+1}\cdots\rightarrow \cdots\pi_i^y(n+1)^y(n+1)^x\pi_{i+1}\cdots.$$
  This explains the term $xy^2\left(\frac{\partial}{\partial x}+\frac{\partial}{\partial y}\right)$;
  \item [\rm ($c_2$)] If the two $n+1$ are inserted into two different positions with the same label, then the changes of labeling are illustrated as follows:
  $$\cdots\pi_i^x\pi_{i+1}\cdots\pi_j^x\pi_{j+1}\cdots\rightarrow \cdots\pi_i^y(n+1)^x\pi_{i+1}\cdots\pi_j^y(n+1)^x\pi_{j+1}\cdots,$$
    $$\cdots\pi_i^y\pi_{i+1}\cdots\pi_j^y\pi_{j+1}\cdots\rightarrow \cdots\pi_i^y(n+1)^x\pi_{i+1}\cdots\pi_j^y(n+1)^x\pi_{j+1}\cdots.$$
    This explains the term $\frac{x^2y^2}{2}\left(\frac{\partial^2}{\partial x^2}+\frac{\partial^2}{\partial y^2 }\right)$;
 \item [\rm ($c_3$)] If the two $n+1$ are inserted into two different positions with different labels, then the changes of labeling are illustrated as follows:
   $$\cdots\pi_i^x\pi_{i+1}\cdots\pi_j^y\pi_{j+1}\cdots\rightarrow \cdots\pi_i^y(n+1)^x\pi_{i+1}\cdots\pi_j^y(n+1)^x\pi_{j+1}\cdots,$$
    $$\cdots\pi_i^y\pi_{i+1}\cdots\pi_j^x\pi_{j+1}\cdots\rightarrow \cdots\pi_i^y(n+1)^x\pi_{i+1}\cdots\pi_j^y(n+1)^x\pi_{j+1}\cdots.$$
    This explains the term $x^2y^2\frac{\partial^2}{\partial x\partial y}$.
\end{itemize}
Therefore, the action of $G$
on the set of labeled multipermutations in $\ms_{\mathbf{m}}$ gives the set of labeled multipermutations in $\ms_{\underline{\mathbf{m}}}$.
This yields $A_{\underline{\mathbf{m}}}(x,y)=G\left(A_{\mathbf{m}}(x,y)\right)$.
\end{proof}

We can now present the following result.
\begin{theorem}\label{thm001}
The Eulerian operators $T$ and $G$ are commutative, i.e., $TG=GT$.
\end{theorem}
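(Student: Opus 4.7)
The plan is to give a short direct algebraic verification of $[T,G]=0$, facilitated by one preliminary simplification. Setting $D:=\partial_x+\partial_y$, one checks that
\[
\frac{x^2y^2}{2}\left(\frac{\partial^2}{\partial x^2}+\frac{\partial^2}{\partial y^2}\right)+x^2y^2\frac{\partial^2}{\partial x\partial y}=\frac{x^2y^2}{2}D^2,
\]
so that $T=xyD$ and $G=xy^2 D+\tfrac{1}{2}x^2y^2 D^2$. Both operators are now polynomials in the single operator $D$ with polynomial coefficients, which helps because $D$ commutes with itself and the Leibniz rule gives $D(w\phi)=(Dw)\phi+wD(\phi)$ for any polynomial $w$.

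Next I would expand $[T,G]=[xyD,xy^2 D]+[xyD,\tfrac{1}{2}x^2y^2 D^2]$ and compute each piece with the two general identities
\[
[uD,vD]=(uDv-vDu)\,D,\qquad [uD,vD^2]=(uDv-2vDu)\,D^2-v(D^2 u)\,D,
\]
both obtained by a one-line application of the Leibniz rule for arbitrary polynomials $u,v$. Taking $u=xy$, $v=xy^2$ in the first gives $uDv-vDu=xy(y^2+2xy)-xy^2(x+y)=x^2y^2$, so $[xyD,xy^2 D]=x^2y^2 D$. Taking $u=xy$, $v=\tfrac{1}{2}x^2y^2$ in the second, one finds $uDv-2vDu=xy(x^2y+xy^2)-x^2y^2(x+y)=0$ and $v(D^2 u)=\tfrac{1}{2}x^2y^2\cdot 2=x^2y^2$, hence $[xyD,\tfrac{1}{2}x^2y^2 D^2]=-x^2y^2 D$. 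The two contributions cancel and $[T,G]=0$ follows.

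The main obstacle is just bookkeeping in the second commutator, since pulling $D^2$ across $xy$ and $D$ across $x^2y^2$ generates correction terms all the way down to order $D$; the preliminary collapse $G=xy^2D+\tfrac{1}{2}x^2y^2D^2$ keeps these manageable, and the cancellation is exact precisely because of the weights $xy^2$ and $\tfrac{1}{2}x^2y^2$ in $G$. A combinatorial alternative via Lemma~\ref{lemma1} together with MacMahon's invariance~\eqref{MacMahon02} is conceptually pleasant, since both $TG(A_{\mathbf{m}}(x,y))$ and $GT(A_{\mathbf{m}}(x,y))$ enumerate descents over a multiset obtained from $\mathbf{m}$ by appending three new letters with multiplicity profile $\{1,2\}$, which coincide up to relabeling. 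However, this only verifies the identity on the linear span of the $A_{\mathbf{m}}(x,y)$, so the algebraic route is more self-contained and is the one I would carry out.
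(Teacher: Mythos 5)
Your proof is correct, and it is organized quite differently from the paper's. The paper splits $G=G_1+G_2+G_3$ as in~\eqref{G1G2G3} and then computes all six compositions $G_iT$ and $TG_i$ explicitly, verifying that the two sums agree with the third-order operator displayed in~\eqref{TG}; this is a transparent but lengthy bookkeeping exercise that offers no hint of why the cancellation occurs. Your route rests on the structural observation that, with $D=\partial_x+\partial_y$, one has $G_2+G_3=\tfrac{1}{2}x^2y^2D^2$ and hence $G=xy^2D+\tfrac{1}{2}x^2y^2D^2$, so that both operators are polynomials in the single first-order operator $D$ with polynomial coefficients. The two commutator identities you invoke,
\[
[uD,vD]=(uDv-vDu)\,D,\qquad [uD,vD^2]=(uDv-2vDu)\,D^2-v(D^2u)\,D,
\]
are immediate from the Leibniz rule, and your evaluations check out: $uDv-vDu=x^2y^2$ for $u=xy$, $v=xy^2$, while for $v=\tfrac12 x^2y^2$ the $D^2$-coefficient vanishes identically and $v(D^2u)=x^2y^2$, so the two order-one contributions cancel exactly. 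This is shorter than the paper's computation, isolates the precise reason the weights $xy^2$ and $\tfrac12 x^2y^2$ make $[T,G]=0$, and would generalize more readily to other operators of the form $p(x,y)D^k$; what it does not produce is the explicit expanded formula~\eqref{TG} for $GT=TG$, which the paper records and reuses in its subsequent example. Your closing remark is also well taken: the combinatorial argument via Lemma~\ref{lemma1} and~\eqref{MacMahon02} only certifies $TG=GT$ on the span of the polynomials $A_{\mathbf{m}}(x,y)$, so the algebraic verification is indeed the right way to establish the operator identity itself.
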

\begin{proof}
Let $G=G_1+G_2+G_3$, where
\begin{equation}\label{G1G2G3}
\begin{aligned}
G_1&=xy^2\left(\frac{\partial}{\partial x}+\frac{\partial}{\partial y}\right),~G_2=\frac{x^2y^2}{2}\left(\frac{\partial^2}{\partial x^2}+\frac{\partial^2}{\partial y^2 }\right),~G_3=x^2y^2\frac{\partial^2}{\partial x\partial y}.
\end{aligned}
\end{equation}
It is easily checked that
\begin{align*}
G_1T&=xy^2\left[(x+y)\left(\frac{\partial}{\partial x}+\frac{\partial}{\partial y}\right)+xy\left(\frac{\partial^2}{\partial x^2}+\frac{\partial^2}{\partial y^2}+2\frac{\partial^2}{\partial x\partial y}\right)\right],\\
G_2T&=\frac{x^2y^2}{2}\left[2y\frac{\partial^2}{\partial x^2}+2x\frac{\partial^2}{\partial y^2}+2(x+y)\frac{\partial^2}{\partial x\partial y}+xy\left(\frac{\partial^3}{\partial x^3}+\frac{\partial^3}{\partial y^3}+\frac{\partial^3}{\partial x^2\partial y}+\frac{\partial^3}{\partial y^2\partial x}\right)\right],\\
G_3T&=x^2y^2\left[\left(\frac{\partial}{\partial x}+\frac{\partial}{\partial y}\right)+(x+y)\frac{\partial^2}{\partial x\partial y}+x\frac{\partial^2}{\partial x^2}+y\frac{\partial^2}{\partial y^2}+xy\left(\frac{\partial^3}{\partial x^2\partial y}+\frac{\partial^3}{\partial y^2\partial x}\right)\right],\\
TG_1&=xy\left[(2xy+y^2)\left(\frac{\partial}{\partial x}+\frac{\partial}{\partial y}\right)+xy^2\left(\frac{\partial^2}{\partial x^2}+\frac{\partial^2}{\partial y^2}+2\frac{\partial^2}{\partial x\partial y}\right)\right],\\
TG_2&=xy\left[(xy^2+x^2y)\left(\frac{\partial^2}{\partial x^2}+\frac{\partial^2}{\partial y^2}\right)+\frac{x^2y^2}{2}\left(\frac{\partial^3}{\partial x^3}+\frac{\partial^3}{\partial y^3}+\frac{\partial^3}{\partial x^2\partial y}+\frac{\partial^3}{\partial y^2\partial x}\right)\right],\\
TG_3&=xy\left[2(x^2y+xy^2)\frac{\partial^2}{\partial x\partial y}+x^2y^2\left(\frac{\partial^3}{\partial x^2\partial y}+\frac{\partial^3}{\partial y^2\partial x}\right)\right].
\end{align*}
Thus we obtain
\begin{equation}\label{TG}
\begin{aligned}
GT=TG=(xy^3+2x^2y^2)\left(\frac{\partial}{\partial x}+\frac{\partial}{\partial y}\right)+(2x^2y^3+x^3y^2)\left(\frac{\partial^2}{\partial x^2}+\frac{\partial^2}{\partial y^2}\right)+\\
(4x^2y^3+2x^3y^2)\frac{\partial^2}{\partial x\partial y}+\frac{x^3y^3}{2}\left(\frac{\partial^3}{\partial x^3}+\frac{\partial^3}{\partial y^3}\right)+\frac{3x^3y^3}{2}\left(\frac{\partial^3}{\partial x^2\partial y}+\frac{\partial^3}{\partial y^2\partial x}\right).
\end{aligned}
\end{equation}
This completes the proof.
\end{proof}

\begin{example}
Note that $A_{\{2\}}(x,y)=xy^2$. Using~\eqref{TG}, one has $$GT(xy^2)=TG(xy^2)=xy^2(y^3+12xy^2+15x^2y+2x^3)=A_{\{2,1,2\}}(x,y)=A_{\{2,2,1\}}(x,y).$$
\end{example}
\section{The proof of Theorem~\ref{mainthm1}}\label{Section:03}
We claim that the bivariate $\gamma$-expansions of $A_{\mathbf{m}}(x,y)$ has three types:
\begin{equation}\label{eq01}
A_{\mathbf{m}}(x,y)=\sum_{k=1}^{\lrf{(m+1)/2}}a(m,k)(xy)^k(x+y)^{m+1-2k},
\end{equation}
\begin{equation}\label{eq02}
A_{\mathbf{m}}(x,y)=y\sum_{k=1}^{\lrf{m/2}}b(m,k)(xy)^k(x+y)^{m-2k},
\end{equation}
\begin{equation}\label{eq03}
A_{\mathbf{m}}(x,y)=\sum_{k=1}^{\lrf{(m+1)/2}}c(m,k)(xy)^k(x+y)^{m+1-2k}+y\sum_{k=1}^{\lrf{m/2}}d(m,k)(xy)^k(x+y)^{m-2k},
\end{equation}
where the first expansion corresponds to $\mathbf{m}=\{1,1,\ldots,1\}$, the second expansion corresponds to $\mathbf{m}=\{2,2,\ldots,2\}$,
and the last expansion corresponds to the other cases.

As illustrated by Example~\ref{example6}, the claim holds for any $m\leqslant 4$. We proceed by induction.
It suffices to distinguish among three distinct cases:
\begin{itemize}
  \item [\rm ($a_1$)] Consider the case $\mathbf{m}=\{1,1,\ldots,1\}$. Note that $A_{\mathbf{m}}(x,y)$ is
  bivariate $\gamma$-positive with the center of symmetry $\frac{m+1}{2}$.
  By~\eqref{eq01} and Lemma~\ref{lemma1}, we have
  \begin{align*}
 A_{\overline{\mathbf{m}}}(x,y)&=T\left(A_{\mathbf{m}}(x,y)\right)\\
 &=T\left(\sum_{k=1}^{\lrf{(m+1)/2}}a(m,k)(xy)^k(x+y)^{m+1-2k}\right)\\
 &=\sum_{k}a(m,k)[k(xy)^k(x+y)^{m+2-2k}+2(m+1-2k)(xy)^{k+1}(x+y)^{m-2k}],
  \end{align*}
Setting $\widetilde{a}(m,k)=ka(m,k)+2(m+3-2k)a(m,k-1)$, we get
  \begin{equation}\label{eq002}
  A_{\overline{\mathbf{m}}}(x,y)=\sum_{k=1}^{\lrf{(m+2)/2}}\widetilde{a}(m,k)(xy)^k(x+y)^{m+2-2k}.
   \end{equation}
  Thus the $\gamma$-expansion of $A_{\overline{\mathbf{m}}}(x,y)$ belongs to the type~\eqref{eq01}.

Consider the
action of $G$ on the basis element $(xy)^k(x+y)^{m+1-2k}$. We get
   \begin{align*}
G\left((xy)^k(x+y)^{m+1-2k}\right)=G_1\left((xy)^k(x+y)^{m+1-2k}\right)+\left(G_2+G_3\right)\left((xy)^k(x+y)^{m+1-2k}\right),
  \end{align*}
where $G_1,G_2$ and $G_3$ are defined by~\eqref{G1G2G3}. After some calculations, this gives the following:
   \begin{align*}
&G_1\left((xy)^k(x+y)^{m+1-2k}\right)=y\left[k(xy)^k(x+y)^{m+2-2k}+2(m+1-2k)(xy)^{k+1}(x+y)^{m-2k}\right],\\
&\left(G_2+G_3\right)\left((xy)^k(x+y)^{m+1-2k}\right)=\binom{k}{2}(xy)^k(x+y)^{m+3-2k}+k(xy)^{k+1}(x+y)^{m+3-2(k+1)}+\\
&2k(m+1-2k)(xy)^{k+1}(x+y)^{m+3-2(k+1)}+2(m+1-2k)(m-2k)(xy)^{k+2}(x+y)^{m+3-2(k+2)}.
  \end{align*}
  Thus $G_1\left((xy)^k(x+y)^{m+1-2k}\right)$ and $\left(G_2+G_3\right)\left((xy)^k(x+y)^{m+1-2k}\right)$ are both bivariate $\gamma$-positive polynomials with the center of symmetry $\frac{m+2}{2}$ and $\frac{m+3}{2}$, respectively. Therefore, the $\gamma$-expansion of $G\left(A_{\mathbf{m}}(x,y)\right)$ belongs to the type~\eqref{eq03}.
  More precisely, there exist nonnegative real numbers $\widetilde{c}(m,k)$ and $\widetilde{d}(m,k)$ such that
  \begin{equation}\label{eq04}
  \begin{aligned}
  G\left(A_{\mathbf{m}}(x,y)\right)=\sum_{k=1}^{\lrf{(m+1)/2}}\widetilde{c}(m,k)(xy)^k(x+y)^{m+3-2k}+\\
  y\sum_{k=1}^{\lrf{m/2}}\widetilde{d}(m,k)(xy)^k(x+y)^{m+2-2k}.
 \end{aligned}
  \end{equation}
  \item [\rm ($a_2$)] Consider the case $\mathbf{m}=\{2,2,\ldots,2\}$.
    By~\eqref{eq02} and Lemma~\ref{lemma1}, we have
  \begin{align*}
&T\left(A_{\mathbf{m}}(x,y)\right)\\
&=T\left(y\sum_{k=1}^{\lrf{m/2}}b(m,k)(xy)^k(x+y)^{m-2k}\right)\\
 &=\sum_{k=1}^{\lrf{m/2}}b(m,k)(xy)^{k+1}(x+y)^{m-2k}+yT\left(\sum_{k=1}^{\lrf{m/2}}b(m,k)(xy)^k(x+y)^{m-2k}\right)\\
 &=\sum_{i=2}^{\lrf{(m+2)/2}}b(m,i-1)(xy)^{i}(x+y)^{m+2-2k}+\\
 &y\sum_{k}b(m,k)\left[k(xy)^k(x+y)^{m+1-2k}+2(m-2k)(xy)^{k+1}(x+y)^{m-2k-1}\right],\\
 &=\sum_{i=2}^{\lrf{(m+2)/2}}b(m,i-1)(xy)^{i}(x+y)^{m+2-2k}+y\sum_{k=1}^{\lrf{(m+1)/2}}\widetilde{b}(m,k)(xy)^k(x+y)^{m+1-2k},
  \end{align*}
  where $\widetilde{b}(m,k)=kb(m,k)+2(m-2k+2)b(m,k-1)$. Thus the $\gamma$-expansion of $T\left(A_{\mathbf{m}}(x,y)\right)$ belongs to the type~\eqref{eq03}.

Consider the
action of the operator $G$ on the basis element $y(xy)^p(x+y)^{q}$.
After some simplifications, we obtain that $G\left(x^py^{p+1}(x+y)^{q}\right)$ has the following expansion:
\begin{equation*}
y(xy)^p(x+y)^{q-2}\left[\binom{p}{2}(x+y)^4+(1+p)(1+2q)(xy)(x+y)^2+4\binom{q}{2}(xy)^{2}\right],
\end{equation*}
which yields that the $\gamma$-expansion of $G\left(A_{\mathbf{m}}(x,y)\right)$ belongs to the type~\eqref{eq02}.
More precisely, there exist nonnegative real numbers $\widetilde{b}(m,k)$ such that
\begin{equation}\label{eq05}
G\left(y\sum_{k=1}^{\lrf{m/2}}b(m,k)(xy)^k(x+y)^{m-2k}\right)=y\sum_{k=1}^{\lrf{(m+2)/2}}\widetilde{b}(m,k)(xy)^k(x+y)^{m+2-2k}.
\end{equation}
  \item [\rm ($a_3$)] Consider $\mathbf{m}=\{m_1,m_2,\ldots,m_n\}$, where $\#\{m_i\in\mathbf{m}: m_i=1\}=r$ and  $\#\{m_i\in\mathbf{m}: m_i=2\}=s$.
Without loss of generality, assume that $1\leqslant r,s<n$ and $r+s=n$.
  Combining Lemma~\ref{lemma1} and Theorem~\ref{thm001}, we have
$A_{\mathbf{m}}(x,y)=G^s(T^r\left(x\right))$.
 Using~\eqref{eq002}, we see that there exist nonnegative real numbers $a(r,k)$ such that
   \begin{align*}
G^s(T^r\left(x\right))=G^s\left(\sum_{k=1}^{\lrf{{(r+1)}/2}}a(r,k)(xy)^k(x+y)^{r+1-2k}\right).
  \end{align*}
\end{itemize}
Repeatedly using~\eqref{eq04} and~\eqref{eq05}, we deduce that
$$A_{\mathbf{m}}(x,y)=\sum_{k\geqslant 1}c(r+2s,k)(xy)^k(x+y)^{r+2s+1-2k}+y\sum_{k\geqslant 1}d(r+2s,k)(xy)^k(x+y)^{r+2s-2k}.$$
When $y=1$, we arrive at
$$A_{\mathbf{m}}(x)=\sum_{k\geqslant 1}c(r+2s,k)x^k(1+x)^{r+2s+1-2k}+\sum_{k\geqslant 1}d(r+2s,k)x^k(1+x)^{r+2s-2k},$$
as desired. This completes the proof.
\section*{Acknowledgements.}
The first author was supported by the National Natural Science Foundation of China (Grant number 12071063).
The third author was supported by the National Science Council of Taiwan (Grant number: MOST 110-2115-M-017-002-MY2).

\bibliographystyle{amsplain}

\end{document}